\numberwithin{equation}{section}
\newtheorem{thm}{Theorem}[section]
\newtheorem{cor}[thm]{Corollary}
\newtheorem{lem}[thm]{Lemma}
\newtheorem{defin}[thm]{Definition}
\newtheorem{rem}[thm]{Remark}
\newtheorem{example}[thm]{Example}
\def\suml{\sum\limits}
\def\intl{\int\limits}
\def\id{\mathop{Id}}
\title{A Jordan-Schwinger Variant of the Spectral Theorem
for Linear Operators}
\author{Wolfgang Bock, Vyacheslav Futorny, Mikhail Neklyudov}
\date{\today}
\begin{document}
\maketitle
\begin{abstract}
In this paper we show variant of the spectral theorem using an algebraic Jordan-Schwinger map. The advantage of this approach is that we don't have restriction of normality on the class of operators we consider. On the other side, we have the restriction that the class of operators we consider should be of weighted Hilbert-Schmidt class. 
\end{abstract}

\section{Introduction}

The spectral theorem is one of the most fundamental theorems in linear functional analysis. It has a tremendous impact due to the various applications that the powerful toolbox of spectral decomposition is providing. Furthermore the description of physical systems and especially their organization and long-range behaviour is encoded in the spectral values \cite{Tre11}. Especially in quantum physics and thermodynamics, where the energy values are eigenvalues of a suitable Hamiltonian this particular importance becomes clear.
Spectral theory, however has its applications far beyond physics. To a large extend it is used to describe the evolution according to a linear dynamics in a suitable space. It hence has direct interlinks into the classical semi-group theory \cite{Pazy, CurtainZwart, JacobZwart} and is therefore massively used in systems theory. A natural involvement via semi-group methods can then also be found in stochastic analysis, in particular in the theory of Markov processes and Markov chains as well as Dirichlet forms.
In recent years spectral theory has also been used in machine learning and artificial intelligence, in particular in pattern recognition. A spectral decomposition allows a principal component analysis \cite{PCA} and hence simplifies the number of contributing modes in a pattern by just considering the dominant ones. In face recognition one for example then talks about eigenfaces \cite{eigenfaces}.
In line of the early developments of Hilbert's operator theory, see e.g.~\cite{CHilbert} in the early 30s of the last century von Neumann made important contributions to the spectral theorem for normal operators \cite{vN1,vN2}. Moreover it was used as the mathematical foundation of many progressing theories in that time from partial differential equations to quantum physics.  
The spectral theorem itself is understood as a collection of results on normal operators; see, e.g. \cite{Hass}. These are generalizing the finite dimensional case of a diagonalizable or triagonalizable matrix. 
The generalization to non-normal operators even beyond the compact case is much more involved and less direct than in the standard case. First attempts were undertaken by Gonshor et al. \cite{Go56, Go58} for special classes of non-linear operators. For special perturbations of normal operators a spectral theorem was given in \cite{Ding2008}. 

In this paper we show variant of the spectral theorem using an algebraic Jordan-Schwinger map introduced in \cite{BFN2021} (Definition 7.2). The advantage of this approach is that we don't have restriction of normality on the class of operators we consider. On the other side, we have the restriction that the class of operators we consider should be of weighted Hilbert-Schmidt class.

\section{Preliminaries}
Fix a separable infinite dimensional Hilbert space $H$.
Let $S_0, S_1:H\to H$ be generators of the Cuntz algebra $O_2$. We will assume that $S_0$ is a shift operator in the sense of \cite{BratelliJoergensen97}, that is $$\cap_{n=1}^{\infty} S_0^n(H)=\{0\}.$$ Note that then $K:=H-S_0(H)$ is a separable Hilbert space. Hence we can choose an orthonormal basis $\{\xi_{0,j}\}_{j=1}^{\infty}$ of $K$ and define $$\xi_{i,j}:=S_0^{i}\xi_{0,j},\quad i\in \mathbb{N}\cup\{0\},\,j\in\mathbb{N}.$$ Then $\{\xi_{i,j}\}$ is an orthonormal basis of $H$. Denote by $\mathbb{H}^2(\mathbb{T})$ the Hardy subspace of square integrable functions on the unit torus $\mathbb{T}$, denoted by $L^2(\mathbb{T})$ and define a unitary operator
\[
V: H\to \mathcal{H}_{+}(K):=\mathbb{H}^2(\mathbb{T})\otimes K,\quad V \xi_{i,j}:=z^{i+1}\otimes \xi_{0,j},\quad i\in \mathbb{N}\cup\{0\},j\in\mathbb{N}.
\]
We identify the elements of $\mathcal{H}_{+}(K)$ with functions from $\mathbb{T}$ to $K$ in such a way that 
$$f\in \mathcal{H}_{+}(K) \text{ if and only if } f(z)=\suml_{n=1}^{\infty}f_n z^n, \quad f_n\in K.$$ Then $\mathcal{H}_{+}(K)$ is a Hilbert space with scalar product
\[
(f,g)_{\mathcal{H}_{+}(K)}= \suml_{n=1}^{\infty}(f_n,g_n)_{K}=\frac{1}{2\pi i}\intl_{\mathbb{S}^1}(f(z), \bar{g}(z))_{K}\frac{dz}{z}.
\]
We will also make use of a Sobolev space based on $\mathcal{H}_{+}(K)$. To define it let us introduce the operator
\[
J(f)(z)=\frac{1}{z}\intl_0^z f(w)\, dw, f\in \mathcal{H}_{+}(K).
\]
Notice that
\[
||J^p(f)||_{\mathcal{H}_{+}(K)}^2=\sum\limits_{n=1}^{\infty}\frac{||f_n||_K^2}{(n+1)^{2p}},p\in\mathbb{Z}.
\]
Moreover, $J$ is invertible with $(z\frac{d}{dz}+1)J=\id$. Thus we can define a Hilbert space
\[
\mathcal{H}_{+}^p(K):=\{f: J^p(f)\in\mathcal{H}_{+}(K),\mbox{ such that }||J^p(f)||_{\mathcal{H}_{+}(K)}<\infty\},\quad p\in\mathbb{Z}.
\]

Notice that $S_0^{+}=V S_0 V^*$ is a multiplication operator, i.e.: 
\begin{equation}\label{eqn:S_0}
S_0^{+}\xi(z)=z\xi(z),\quad \xi\in \mathcal{H}_{+}(K),
\end{equation}
and, correspondingly, $S_0^*$ is unitary equivalent to  
\begin{equation}\label{eqn:S_0adj}
(S_0^{+})^{*}\xi(z)=V S_0^* V^*=\frac{\xi(z)}{z}-\frac{1}{2\pi i}\intl_{\mathbb{S}^1}\frac{\xi(w)}{w^2}dw=\bar{z}\xi(z)-\frac{1}{2\pi i}\intl_{\mathbb{S}^1}\bar{w}^2\xi(w)dw,\xi\in \mathcal{H}_{+}(K).
\end{equation}
In order to establish a connection with Cuntz algebras,
we recall the following definition of the Jordan-Schwinger map (see \cite{BFN2021}, Definition 7.2):
\begin{defin}\label{def:JSmap}
Let $V$ be a separable locally convex Hausdorff topological vector space  in duality $<\cdot,\cdot>$ with $V^*$ and biorthogonal system $\{e_k,f_k\}_{k=1}^{\infty}$. Define the Jordan-Schwinger map as follows  
\begin{equation}\label{ex_1}
D(A):=\sum\limits_{\alpha,\beta\in \mathbb{N}}<Ae_{\alpha},f_{\beta}> s_{\alpha} s_{\beta}^*, \quad A\in \mathcal{L}(V,V),
\end{equation}
\begin{equation}\label{ex_1a}
\partial(f):= \sum\limits_{\beta\in \mathbb{N}}<f,f_{\beta}> s_{\beta}^*, f\in V,
\end{equation}
\begin{equation}\label{ex_1b}
\bar{\partial}(g):= \sum\limits_{\alpha\in \mathbb{N}}<e_{\alpha},g> s_{\alpha}, g\in V^*
\end{equation}
where $\{s_k, s_k^*\}_{k=1}^{\infty}$ are generators of the Cuntz algebra  $O_{\infty}$.
\end{defin}

We also recall the following result \cite{BratelliJoergensen97} (Lemma 6.1), which gives a one-to-one correspondence between representations of th Cuntz algebras $O_{\infty}$ and $O_N$.

\begin{lem}\label{lem:O_fin_infin}
There is 1-to-1 correspondence between representations $s_i\mapsto S_i,i=0,\ldots ,N-1$ of $O_N$ on $\mathcal{H}$ such that $S_0$ is a shift, and representations of $O_{\infty}$ on $\mathcal{H}$ such that the sum of the ranges of
the isometries is $\id$. If the representatives of the generators of $O_{\infty}$ are denoted by $T_k^{j}$, $j=1,\ldots,N-1$, $k\in\mathbb{N}$, so that
\[
(T_{k_1}^{j_1})^* T_{k_2}^{j_2}=\delta_{k_1,k_2}\delta_{j_1,j_2}\id,
\]
and
\[
\sum\limits_{j=1}^{N-1}\sum\limits_{k=1}^{\infty}T_k^{j}(T_k^{j})^*=\id,
\]
then the 1-to-1 correspondence is given by
\[
T_k^j=S_0^{k-1}S_j, j=1,\ldots,N-1, k\in\mathbb{N},
\]
and by
\[
S_0=\sum\limits_{j=1}^{N-1}\sum\limits_{k=1}^{\infty}T_{k+1}^{j}T_k^{j,*},
\]
and
\[
S_j=T_1^j, j=1,\ldots,N-1,
\]
where all infinite sums converge in strong operator topology.
\end{lem}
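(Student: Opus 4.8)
The plan is to verify the two assignments separately and then check that they are mutually inverse. I would begin with the forward direction, starting from a representation of $O_N$ with $S_0$ a shift and setting $T_k^j := S_0^{k-1}S_j$. The orthogonality relations $(T_{k_1}^{j_1})^* T_{k_2}^{j_2} = \delta_{k_1,k_2}\delta_{j_1,j_2}\id$ should follow from a direct computation of $(S_0^*)^{k_1-1}S_0^{k_2-1}$, which collapses to $\id$ when $k_1=k_2$ (leaving $S_{j_1}^* S_{j_2}=\delta_{j_1,j_2}\id$) and otherwise leaves a factor $S_0$ or $S_0^*$ that annihilates against $S_{j}^*$ or $S_j$ with $j\geq 1$ because $S_j^* S_0 = \delta_{0,j}\id = 0$. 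This part is routine once the index bookkeeping is fixed.

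The first genuinely structural step is the completeness identity $\suml_{j=1}^{N-1}\suml_{k=1}^{\infty}T_k^j (T_k^j)^* = \id$. Here I would factor out $S_0^{k-1}(\cdot)(S_0^*)^{k-1}$ and use the Cuntz relation $\suml_{j=0}^{N-1}S_j S_j^* = \id$ to write $\suml_{j=1}^{N-1}S_j S_j^* = \id - S_0 S_0^*$. Setting $P_k := S_0^k (S_0^*)^k$, the orthogonal projection onto $S_0^k(\mathcal{H})$, the sum telescopes:
\[
\suml_{k=1}^{\infty}S_0^{k-1}(\id - S_0 S_0^*)(S_0^*)^{k-1} = \suml_{k=1}^{\infty}(P_{k-1}-P_k) = P_0 - \liml_{n\to\infty}P_n.
\]
Since $\{P_n\}$ is a decreasing sequence of projections converging strongly to the projection onto $\cap_n S_0^n(\mathcal{H})$, the shift hypothesis $\cap_n S_0^n(\mathcal{H})=\{0\}$ forces $\liml_n P_n = 0$, leaving $\id$. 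This is exactly where the shift assumption enters; the only care needed is to argue strong (not norm) convergence of the telescoping series.

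For the reverse direction I would start from an $O_\infty$ representation with $\suml_{j,k}T_k^j(T_k^j)^* = \id$ and define $S_j := T_1^j$ together with $S_0 := \suml_{j,k}T_{k+1}^j(T_k^j)^*$ (both sums over $j=1,\dots,N-1$, $k\in\mathbb{N}$, converging strongly). The Cuntz relations for $S_0,\ldots,S_{N-1}$ should reduce to repeated application of $(T_k^j)^* T_{k'}^{j'} = \delta_{k,k'}\delta_{j,j'}\id$ together with the completeness relation: $S_0^* S_0 = \suml_{j,k}T_k^j(T_k^j)^* = \id$, $\,S_0^* S_j = 0$ (because the surviving index $k+1\geq 2$ never equals $1$), $\,S_j^* S_{j'} = \delta_{j,j'}\id$, and $S_0 S_0^* + \suml_{j}T_1^j(T_1^j)^* = \id$. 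The key observation, which I would isolate, is that $S_0 T_k^j = T_{k+1}^j$, i.e.\ $S_0$ acts as the shift $T_k^j \mapsto T_{k+1}^j$ on the orthogonal decomposition $\mathcal{H} = \bigoplus_{j,k}T_k^j(\mathcal{H})$ provided by the $O_\infty$ relations.

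The main obstacle I anticipate is verifying that this $S_0$ is itself a shift. Using $S_0 T_m^j = T_{m+1}^j$ and the fact that an isometry maps a closed subspace onto a closed subspace, one shows by induction that $S_0^n(\mathcal{H}) = \bigoplus_{j,\,m\geq n+1}T_m^j(\mathcal{H})$, whence $\cap_n S_0^n(\mathcal{H}) = \{0\}$; the care here is in justifying that the images of the mutually orthogonal ranges behave as claimed and that no vector survives in the intersection. Finally, to see the two constructions are inverse, I would substitute: starting from $O_N$, the formula $T_k^j = S_0^{k-1}S_j$ gives back $\tilde S_j = T_1^j = S_j$ and, by the same telescoping (now with an extra factor $S_0$ pulled out), $\tilde S_0 = S_0\,(P_0 - \liml_n P_n) = S_0$; starting from $O_\infty$, the relation $S_0^{k-1}T_1^j = T_k^j$ recovers $\tilde T_k^j = T_k^j$. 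This closes the correspondence.
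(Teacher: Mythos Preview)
The paper does not prove this lemma at all: it is quoted verbatim as Lemma~6.1 of \cite{BratelliJoergensen97} and used as a black box. Your proposal is a correct direct verification---the telescoping with $P_k=S_0^k(S_0^*)^k$ and the identification $S_0T_k^j=T_{k+1}^j$ are exactly the standard ingredients, and each of the Cuntz relations and the shift property checks out as you describe---so there is nothing to compare against in the present paper.
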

We will apply Lemma \ref{lem:O_fin_infin} for $N=2$. Since in this case we have $j=1$, we omit the index $j$ for the sake of readability.

\section{Spectral Theorem}

With the help of Lemma \eqref{lem:O_fin_infin} for $N=2$ together with the definition \ref{def:JSmap} of the map $D$ we can connect every linear bounded operator to an analytic function in the variables $S_0$ and $S_0^*$. This can already be considered as a version of the spectral theorem. We will show throughout the section that indeed $D(A)$ and $A$ have the same spectrum and furthermore we have a representation of the quadratic form of the operator $A$ in terms of $D(A)$. 
\begin{lem}\label{lem:Spectral}
\[
D(A)=\sum\limits_{m,n\in\mathbb{N}}<A e_m, f_n>S_0^{m-1}S_1 S_1^* (S_0^*)^{n-1}, 
\]
or, equivalently (applying the identity $S_0 S_0^*+S_1 S_1^*=\id$),
\begin{eqnarray}
D(A)&=\sum\limits_{m,n\in\mathbb{N}}(<A e_{m+1}, f_{n+1}>-<A e_m, f_n>)S_0^{m} (S_0^*)^{n}\nonumber\\
&+\sum\limits_{m=0}^{\infty}<A e_{m+1}, f_{1}>S_0^{m}+\sum\limits_{n=1}^{\infty}<A e_{1}, f_{n+1}>(S_0^*)^{n}.
\end{eqnarray}
\end{lem}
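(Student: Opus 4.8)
The plan is to establish the two displayed identities in turn, the first by direct substitution from the Cuntz correspondence and the second by an algebraic rearrangement; throughout I abbreviate $a_{m,n} := <A e_m, f_n>$. First I would apply Lemma \ref{lem:O_fin_infin} with $N=2$. Since $N=2$ forces $j=1$, the stated correspondence identifies the generators $s_k$ of $O_\infty$ with $T_k = S_0^{k-1}S_1$. Substituting $s_\alpha = S_0^{\alpha-1}S_1$ and $s_\beta^* = (S_0^{\beta-1}S_1)^* = S_1^*(S_0^*)^{\beta-1}$ directly into the defining series \eqref{ex_1} for $D(A)$ yields the first formula
\[
D(A)=\suml_{m,n\in\mathbb{N}} a_{m,n}\, S_0^{m-1}S_1 S_1^*(S_0^*)^{n-1}.
\]

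For the second formula I would insert the Cuntz relation $S_1 S_1^* = \id - S_0 S_0^*$ into each summand, which splits it as $S_0^{m-1}S_1 S_1^*(S_0^*)^{n-1} = S_0^{m-1}(S_0^*)^{n-1} - S_0^m(S_0^*)^n$, so that
\[
D(A)=\suml_{m,n\ge 1} a_{m,n}\big(S_0^{m-1}(S_0^*)^{n-1}-S_0^m(S_0^*)^n\big).
\]
The key step is then a reindexing: shifting $(m,n)\mapsto(m+1,n+1)$ in the first piece turns it into $\suml_{m,n\ge 0} a_{m+1,n+1}S_0^m(S_0^*)^n$, while the second piece is $\suml_{m,n\ge 1} a_{m,n}S_0^m(S_0^*)^n$. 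I would then split the shifted range $\{m,n\ge 0\}$ into the interior $\{m,n\ge 1\}$, the full edge $\{n=0\}$, and the remaining edge $\{m=0,\,n\ge 1\}$. On the interior, subtracting the second piece produces the coefficient $a_{m+1,n+1}-a_{m,n}$ and hence the first sum in the claim; the edge $\{n=0\}$ contributes $\suml_{m\ge 0} a_{m+1,1}S_0^m$ (the corner term $a_{1,1}\,\id$ at $m=0$ included), and the edge $\{m=0,\,n\ge 1\}$ contributes $\suml_{n\ge 1} a_{1,n+1}(S_0^*)^n$. These are precisely the two boundary sums in the statement.

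The main obstacle is not the algebra but the justification of rearranging a doubly-infinite operator series, which is purely formal until one fixes a topology in which the series converge. Rather than control the full series directly, I would compute the matrix elements $<D(A)\xi_{i,j},\xi_{p,q}>$ against the orthonormal basis. Because $S_0$ and $S_0^*$ act on the basis as shifts of the index $i$, with $(S_0^*)^{n-1}\xi_{i,j}=0$ once $n-1>i$ and $S_1 S_1^*$ projecting onto the span of the $\xi_{0,j}$, the Kronecker deltas collapse the double sum to a single surviving term (the one with $n=i+1$), on both the original and the rearranged expression. Checking that these single terms coincide for every $(i,j,p,q)$ then identifies the two sides, the passage from matching matrix elements to operator equality being supplied by the boundedness (weighted Hilbert-Schmidt) hypothesis on $A$, which guarantees strong-operator convergence of the series in Lemma \ref{lem:O_fin_infin}.
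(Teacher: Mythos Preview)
Your proposal is correct and is exactly the argument the paper intends: the lemma is stated without an explicit proof, the only indication being the parenthetical ``applying the identity $S_0 S_0^*+S_1 S_1^*=\id$'', and your derivation---substitute $s_k=S_0^{k-1}S_1$ from Lemma~\ref{lem:O_fin_infin} into \eqref{ex_1}, then replace $S_1S_1^*$ by $\id-S_0S_0^*$ and reindex---is precisely the computation that hint is pointing to. Your additional paragraph on justifying the rearrangement via matrix elements goes beyond what the paper does (it treats the identity as formal), but it is sound and does no harm.
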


Now we can apply formulas \eqref{eqn:S_0}, \eqref{eqn:S_0adj} to deduce the following unitary equivalence of $D(A)$ with an analytic function of a shift operator on the Hardy space and its adjoint. As a result we obtain that $D(A)$ is unitary equivalent to a suitable integral operator.  
\begin{thm}\label{thm:Spectral}
The operator $D(A)$ is unitary equivalent to the operator $F(S_0^{+},(S_0^{+})^{*})$ where \\ $F=F(z,w):\mathbb{C}^2\to\mathbb{C}$ given by
\[
F=\sum\limits_{m,n\in\mathbb{N}}(<A e_{m+1}, f_{n+1}>-<A e_m, f_n>)z^m w^n+\sum\limits_{m=0}^{\infty}<A e_{m+1}, f_{1}> z^m+\sum\limits_{n=1}^{\infty}<A e_{1}, f_{n+1}> w^n,
\]
and the operators $S_0^{+}$, $(S_0^{+})^{*}$ are given by formulas \eqref{eqn:S_0} and \eqref{eqn:S_0adj} respectively.
Moreover, we have
\begin{equation}\label{eqn:ReprIntegral}
VD(A)V^*f(z)=\frac{1}{2\pi i}\intl_{\mathbb{S}^1} K(A,z,w)\frac{f(w)}{w}dw, 
\end{equation}
with
\[
K(A,z,w):=\sum\limits_{m,n=1}^{\infty}<A e_m, f_n> z^m w^{-n}.
\]
Furthermore, if  $A$ is an $(r,p)$-weighted Hilbert-Schmidt operator, i.e.
\begin{equation}\label{eqn:wHS}
||A||_{HS^{(r,p)}}^2=\sum\limits_{m,n=1}^{\infty}\frac{|<A e_m,f_n>|^2(n+1)^{2r}}{(m+1)^{2p}}<\infty,
\end{equation}
then
\begin{equation}\label{eqn:NormEstimate}
||VD(A)V^*||_{\mathcal{L}(\mathcal{H}_{+}^r(K),\mathcal{H}_{+}^p(K))}^2\leq ||A||_{HS^{(r,p)}}^2
\end{equation}
\end{thm}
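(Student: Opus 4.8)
The plan is to establish the three assertions in turn, deriving each from Lemma~\ref{lem:Spectral} and the explicit multiplication/backward-shift formulas \eqref{eqn:S_0}, \eqref{eqn:S_0adj}. For the unitary equivalence I would take the \emph{second} expression for $D(A)$ in Lemma~\ref{lem:Spectral} and conjugate it by the unitary $V$. Since $V S_0 V^* = S_0^{+}$ by \eqref{eqn:S_0} and $V S_0^* V^* = (S_0^{+})^{*}$ by \eqref{eqn:S_0adj}, one has $V S_0^m (S_0^*)^n V^* = (S_0^{+})^m ((S_0^{+})^{*})^n$ for all $m,n$, so term by term the conjugated series is exactly $F(S_0^{+},(S_0^{+})^{*})$ with $F$ as stated. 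The only point needing care here is convergence of the operator series, which I would handle by first working on the dense subspace of finitely supported vectors and then extending via the norm bound \eqref{eqn:NormEstimate} proved below.

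For the integral representation I would instead start from the \emph{first} expression $D(A)=\sum_{m,n}\langle Ae_m,f_n\rangle S_0^{m-1}S_1 S_1^*(S_0^*)^{n-1}$ and again conjugate by $V$. Using $S_0 S_0^*+S_1 S_1^*=\id$ together with \eqref{eqn:S_0}, \eqref{eqn:S_0adj}, the operator $V S_1 S_1^* V^*=\id-S_0^{+}(S_0^{+})^{*}$ acts as the orthogonal projection onto the lowest Fourier mode $z^1$. Writing the input as $f(z)=\sum_{k\geq 1}c_k z^k$ with $c_k\in K$ (fresh notation, to avoid clashing with the biorthogonal $f_n$), I would track the action of a single summand: $((S_0^{+})^{*})^{n-1}$ is the backward shift applied $n-1$ times, sending $f$ to $\sum_{k\geq 1}c_{k+n-1}z^k$; the projection extracts the coefficient $c_n$ at $z^1$; and the final multiplication $(S_0^{+})^{m-1}$ returns $c_n z^m$. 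Summing gives $V D(A) V^* f(z)=\sum_{m,n}\langle Ae_m,f_n\rangle c_n z^m$, which I would then identify with the claimed integral operator by evaluating $\frac{1}{2\pi i}\int_{\mathbb{S}^1} w^{k-n-1}\,dw=\delta_{k,n}$ termwise, so that $\frac{1}{2\pi i}\int_{\mathbb{S}^1}K(A,z,w)\frac{f(w)}{w}\,dw$ reproduces the very same series.

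The substantive part is the bound \eqref{eqn:NormEstimate}. From the action just computed, the image $g=VD(A)V^* f$ has Fourier coefficients $g_m=\sum_{n\geq 1}\langle Ae_m,f_n\rangle c_n\in K$. Using the Sobolev-norm formula $\|h\|_{\mathcal{H}_+^s(K)}^2=\sum_{m\geq 1}\|h_m\|_K^2/(m+1)^{2s}$, I would write
\[
\|g\|_{\mathcal{H}_+^p(K)}^2=\sum_{m\geq 1}\frac{1}{(m+1)^{2p}}\Bigl\|\sum_{n\geq 1}\langle Ae_m,f_n\rangle c_n\Bigr\|_K^2,
\]
and apply the Cauchy--Schwarz inequality to the inner $K$-valued sum after inserting the weight $(n+1)^r$, i.e.\ splitting $\langle Ae_m,f_n\rangle c_n=\bigl(\langle Ae_m,f_n\rangle(n+1)^r\bigr)\bigl(c_n(n+1)^{-r}\bigr)$. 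This gives
\[
\Bigl\|\sum_{n\geq 1}\langle Ae_m,f_n\rangle c_n\Bigr\|_K^2\leq\Bigl(\sum_{n\geq 1}|\langle Ae_m,f_n\rangle|^2(n+1)^{2r}\Bigr)\,\|f\|_{\mathcal{H}_+^r(K)}^2,
\]
since $\|f\|_{\mathcal{H}_+^r(K)}^2=\sum_{n\geq 1}\|c_n\|_K^2/(n+1)^{2r}$. Substituting this back and summing in $m$ collapses the double sum precisely into $\|A\|_{HS^{(r,p)}}^2\,\|f\|_{\mathcal{H}_+^r(K)}^2$ by \eqref{eqn:wHS}, which is \eqref{eqn:NormEstimate}.

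I expect the main obstacle to be bookkeeping rather than any deep estimate: one must justify the termwise interchange of summation with the contour integral in the integral representation, and confirm that the operator, a priori only defined on finitely supported $f$, extends continuously to all of $\mathcal{H}_+^r(K)$. The finiteness of $\|A\|_{HS^{(r,p)}}$ is exactly what licenses both that convergence and the final Cauchy--Schwarz step, so the estimate and the well-definedness are proved together.
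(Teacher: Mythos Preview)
Your proposal is correct and follows essentially the same route as the paper: conjugate the two expressions from Lemma~\ref{lem:Spectral} by $V$, identify the projection $\id-S_0^{+}(S_0^{+})^{*}$ with extraction of the lowest Fourier mode to obtain \eqref{eqn:ReprIntegral}, and then apply Cauchy--Schwarz with the weight $(n+1)^r$ to the Fourier coefficients of the image for \eqref{eqn:NormEstimate}. The only cosmetic difference is that the paper carries out the integral-representation step in function/contour-integral language (writing $(\id-S_0^{+}(S_0^{+})^{*})g(z)=\frac{z}{2\pi i}\int g(w)w^{-2}\,dw$ and then showing that in $((S_0^{+})^{*})^{n-1}f=f(z)z^{1-n}+\text{lower powers}$ only the leading term survives the contour integral), whereas you track the action directly on the Fourier coefficients $c_k$ and recognise the integral afterwards; the two computations are the same.
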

\begin{proof}
Let us show formula \eqref{eqn:ReprIntegral}. We have
\[
VD(A)V^*= \sum\limits_{m,n\in\mathbb{N}}<A e_m, f_n>(S_0^+)^{m-1}(\id-S_0^+(S_0^+)^*)((S_0^+)^*)^{n-1}.
\]
Now it is easy to calculate that
\[
(\id-S_0^+(S_0^+)^*)g(z)=\frac{z}{2\pi i}\intl_{\mathbb{S}^1}\frac{g(w)}{w^2}\,dw.
\]
Consequently, we have
\[
VD(A)V^*f(z)= \frac{1}{2\pi i}\sum\limits_{m,n\in\mathbb{N}}<A e_m, f_n>z^m \intl_{\mathbb{S}^1}\frac{((S_0^+)^*)^{n-1} f}{w^2}\,dw.
\]
Now the result follows from the representation
\[
((S_0^+)^*)^{n-1} f(z)=\frac{f(z)}{z^{n-1}}+\sum\limits_{i=1}^{n-1}\frac{\kappa_i(f)}{z^{n-i-1}},
\] 
where $\{\kappa_i\}_{i=1}^{n-1}$ are certain linear functionals of $f$, and the observation that only the first term remains after the integration, that is
\[
\intl_{\mathbb{S}^1}\frac{((S_0^+)^*)^{n-1} f}{w^2}\,dw=\intl_{\mathbb{S}^1}\frac{f(w)}{w^{n+1}}\,dw.
\]
The inequality \eqref{eqn:NormEstimate} can be obtained as follows. 
\begin{eqnarray*}
&\left|\frac{1}{2\pi i}\intl_{\mathbb{S}^1} I^p K(A,z,w)\frac{f(w)}{w} dw\right|_{\mathcal{H}_{+}(K)}^2 = \left|\frac{1}{2\pi} \intl_{0}^{2\pi} I^p K(A,z,e^{i\phi})f(e^{i\phi})\,d\phi\right|_{\mathcal{H}_{+}(K)}^2\\
&= \left|\suml_{m,n=1}^{\infty}\frac{A_{mn}}{(m+1)^{p}}z^m
\frac{1}{2\pi} \intl_{0}^{2\pi}f(e^{i\phi}) e^{-i n\phi}\,d\phi
\right|_{\mathcal{H}_{+}(K)}^2\\
&= \left|\suml_{m=1}^{\infty}\left(\suml_{n=1}^{\infty}\frac{A_{mn}}{(m+1)^{p}}f_n\right) z^m
\right|_{\mathcal{H}_{+}(K)}^2\\
&=\suml_{m=1}^{\infty}\left|\suml_{n=1}^{\infty}
\frac{A_{mn}(n+1)^{r}}{(m+1)^{p}}\frac{f_n}{(n+1)^{r}}\right|_{K}^2\\
&\leq \sum\limits_{m,n=1}^{\infty}\frac{|<A e_m,f_n>|^2(n+1)^{2r}}{(m+1)^{2p}} \left| f\right|_{\mathcal{H}_{+}^r(K)}^2.
\end{eqnarray*}
Which proves the assertion.

\end{proof}
\begin{rem}
Suppose that an operator $A$ satisfy the following conditions:
\[
|<A e_m,f_n>|\leq \frac{C}{1+|n-m|^l}, l>r+\frac{1}{2}
\]
and
\[
p>r+\frac{1}{2}.
\]
Then the operator $A$ belongs to the weighted Hilbert-Schmidt space $HS^{(r,p)}$.
\end{rem}
\begin{example}
\begin{trivlist}
\item[(a)] Let $A$ be the following infinite diagonal matrix
\[
A=\left(
\begin{array}{ccccc}
 \alpha    & 0 & \ldots & 0 & \ldots\\
  0  & \frac{\alpha^2}{2} & 0\ &\ldots & \ldots\\
  0 & 0 & \frac{\alpha^3}{3} & 0 & \ldots\\
  \ldots & \ldots & \ldots & \ldots & \ldots\\
  0 & \ldots & 0 &\frac{\alpha^n}{n} & \ldots\\
  \ldots & \ldots & \ldots & \ldots & \ldots
\end{array}
\right),\, |\alpha|<1.
\]
Then we can deduce from Theorem \ref{thm:Spectral} that the corresponding operator $D(A)$ is unitary equivalent to the following operator
\[
f\mapsto \frac{1}{2\pi}\intl_{0}^{2\pi} \ln{(1-\alpha e^{i(\phi-\psi)})} f(e^{i\psi})\,d\psi
\]
\item[(b)] 
If
\[
A=\left(
\begin{array}{ccccc}
 1    & 0 & \ldots & 0 & \ldots\\
  0  & \frac{1}{2} & 0\ &\ldots & \ldots\\
  0 & 0 & \frac{1}{4} & 0 & \ldots\\
  \ldots & \ldots & \ldots & \ldots & \ldots\\
  0 & \ldots & 0 &\frac{1}{2^n} & \ldots\\
  \ldots & \ldots & \ldots & \ldots & \ldots
\end{array}
\right),\, |\alpha|<1,
\]
then similarly one deduces that $D(A)$ is unitary equivalent to the operator
\[
f\mapsto \frac{1}{2\pi}\intl_{0}^{2\pi} \frac{1}{(1-\frac{1}{2} e^{i(\phi-\psi)})} f(e^{i\psi})\,d\psi.
\]
\item[(c)] If $A$ is an operator in the Jordan form with eigenvalues $\{a_m\}_{m=1}^{\infty}$ and corresponding sizes $\{n_k-n_{k-1}\}_{k=1}^{\infty}$ of the Jordan blocks ($n_0=0$ by definition), i.e.
\[
A=J_{n_1}(a_1)\oplus J_{n_2-n_1}(a_2)\ldots J_{n_{m}-n_{m-1}}(a_m)\oplus .
\]
Then $D(A)$ is unitary equivalent to the operator which maps $f$ to

\[\frac{1}{2\pi}\intl_{0}^{2\pi} \left[\frac{e^{i(\phi-\psi)})}{e^{i(\phi-\psi)})-1}\suml_{l=1}^{\infty}a_l\left(e^{i n_l(\phi-\psi)}-e^{i n_{l-1}(\phi-\psi)}\right)+\\ e^{i\phi}\left(\frac{e^{i(\phi-\psi)}}{1-e^{i(\phi-\psi)}}-\kappa(e^{i(\phi-\psi)})\right)\right] f(e^{i\psi})\,d\psi,
\]
where $\kappa(z)=\suml_{k=1}^{\infty} z^{n_k}$. For instance, if all $n_k=k, k\in\mathbb{N}$, that is the operator is diagonal, we get the kernel of the form $\suml_{l=1}^{\infty}a_l e^{i l(\phi-\psi)}$. If $A$ is not  diagonal then the corresponding integral operator is not  a convolution operator. 
\item[(d)] Let $A=\{a_{i-j}\}_{i,j\in\mathbb{Z}}$ be the double sided Toeplitz operator then the  integral operator corresponding to $D(A)$ will have kernel
\[
\frac{e^{i(\phi-\psi)}}{1-e^{i(\phi-\psi)}}\suml_{k\in\mathbb{Z}} a_k e^{ik\phi}.
\]
\item[(e)] Let $A=\{a_{i-j}\}_{i,j=1}^{\infty}$ be the Toeplitz operator. Then the  integral operator corresponding to $D(A)$ will have kernel
\[
\suml_{m,n=1}^{\infty} a_{m-n} e^{i(m\phi-n\psi)}.
\]
\end{trivlist}
\end{example}

The following corollary gives a one-to-one connection between the bilinear form of the operator $A$ and its Jordan-Schwinger image. 

\begin{cor}
We have the following representation for $A$
\[
<Af,g>=\partial(f)D(A)\bar{\partial}(g)=\partial(f)V^* F(S_0^{+},(S_0^{+})^{*})V\bar{\partial}(g).
\]
\end{cor}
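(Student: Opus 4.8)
The plan is to handle the two equalities separately: the first is a purely algebraic consequence of the Cuntz relations together with the biorthogonal expansion, while the second is an immediate substitution of the unitary equivalence from Theorem \ref{thm:Spectral}.

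For the identity $<Af,g>=\partial(f)D(A)\bar{\partial}(g)$ I would insert the defining series \eqref{ex_1}, \eqref{ex_1a} and \eqref{ex_1b} and multiply out, obtaining
\[
\partial(f)D(A)\bar{\partial}(g)=\sum\limits_{\beta,m,n,\alpha}<f,f_\beta><Ae_m,f_n><e_\alpha,g>\,s_\beta^* s_m s_n^* s_\alpha.
\]
The crucial step is to collapse the operator word $s_\beta^* s_m s_n^* s_\alpha$ by applying the defining relation $s_i^* s_j=\delta_{ij}\,\id$ of $O_\infty$ (the same relation satisfied by the $T_k$ in Lemma \ref{lem:O_fin_infin}) twice, which yields $s_\beta^* s_m s_n^* s_\alpha=\delta_{\beta m}\delta_{n\alpha}\,\id$. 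Summing out the two Kronecker deltas reduces the quadruple sum to the single scalar
\[
\partial(f)D(A)\bar{\partial}(g)=\Big(\sum\limits_{m,n}<f,f_m><Ae_m,f_n><e_n,g>\Big)\id.
\]

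It then remains to recognize the bracketed coefficient as $<Af,g>$. Expanding $f$ in the biorthogonal system as $f=\suml_{m}<f,f_m>e_m$ gives $Af=\suml_{m}<f,f_m>Ae_m$; expanding each image as $Ae_m=\suml_{n}<Ae_m,f_n>e_n$ and pairing with $g$ produces exactly $<Af,g>=\suml_{m,n}<f,f_m><Ae_m,f_n><e_n,g>$, so that (under the usual identification of a scalar with the corresponding multiple of $\id$) the first equality holds. The second equality is then immediate: Theorem \ref{thm:Spectral} provides the unitary equivalence $D(A)=V^*F(S_0^{+},(S_0^{+})^{*})V$, and substituting this into the middle term gives the right-hand expression.

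The main obstacle is not the algebra but the convergence bookkeeping. The objects $\partial(f)$, $D(A)$ and $\bar{\partial}(g)$ are each infinite series in the Cuntz generators, so one must justify that they converge in a topology (for instance the strong operator topology on $H$) for which the termwise multiplication and the interchange of the four summations used to invoke $s_i^* s_j=\delta_{ij}\,\id$ are legitimate. I expect this to follow by combining the weighted Hilbert-Schmidt bound \eqref{eqn:NormEstimate} of Theorem \ref{thm:Spectral} with Cauchy-Schwarz estimates on the coefficient sequences $<f,f_m>$ and $<e_n,g>$ measured against the weights $(m+1)^{2p}$ and $(n+1)^{2r}$ appearing in \eqref{eqn:wHS}; this amounts to requiring $f$ and $g$ to lie in the appropriate weighted Sobolev spaces built on $\mathcal{H}_{+}(K)$. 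Once these summability conditions are secured, the reduced double series $\suml_{m,n}<f,f_m><Ae_m,f_n><e_n,g>$ is absolutely convergent, all rearrangements are justified, and the formal computation above becomes rigorous.
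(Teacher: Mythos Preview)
Your argument is correct and is exactly the computation the paper has in mind. The paper gives no proof for this corollary at all: it is stated as an immediate consequence of the definitions together with Theorem~\ref{thm:Spectral}, and the underlying algebraic identity $\partial(f)D(A)=\partial(Af)$ (and hence $\partial(f)D(A)\bar\partial(g)=\langle Af,g\rangle\,\id$) is precisely Lemma~7.3 of \cite{BFN2021}, which is obtained by the same Cuntz-relation collapse $s_i^*s_j=\delta_{ij}\id$ that you carry out explicitly. Your second equality is likewise the intended one-line substitution of the unitary equivalence from Theorem~\ref{thm:Spectral}.
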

For an operator $A$ we denote $\sigma(A)$ (corr. $\sigma_p(A)$, $\sigma_{res}(A)$) its spectrum (corr. point spectrum, residue spectrum).
The following Lemma will show how the spectra of $A$ and its different parts connect to those of $D(A)$. Indeed we see that $D(A)$ is rather related to $A^*$ from spectral point of view.
\begin{lem}\label{lem:Spectrum}
 For $A\in \mathcal{L}(V,V)$ we have 
\[
\sigma_p(A)\subset \sigma_{res}(D(A))\cup\sigma_{p}(D(A)), \sigma_p(A^*)\subset \sigma_{p}(D(A)).
\]
Moreover,
\[
\sigma_{p}(D(A))\subset \sigma_{res}(A)\cup\sigma_{p}(A).
\]
If, in addition, $V$ is a Frechet space then
\[
\sigma(A)=\sigma(D(A)). 
\]
Furthermore, if $V$ is a separable Banach space with separable dual then we have 
\[
D(A^*)=D^*(A).
\]

\end{lem}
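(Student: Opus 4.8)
The backbone of the whole argument is a short list of algebraic identities tying $D$ to $A$ through $\partial$ and $\bar\partial$. Working in the representation of $O_\infty$ on $H$ furnished by Lemma \ref{lem:O_fin_infin} (so that $s_\alpha^*s_\beta=\delta_{\alpha\beta}\id$ and $\sum_\alpha s_\alpha s_\alpha^*=\id$), and using biorthogonality $\langle e_k,f_l\rangle=\delta_{kl}$ together with the expansion $h=\sum_\beta\langle h,f_\beta\rangle e_\beta$, I would first establish, each by a one-line manipulation,
\[
\partial(f)\,D(A)=\partial(Af),\qquad D(A)\,\bar\partial(g)=\bar\partial(A^*g),
\]
\[
\partial(f)\,\bar\partial(g)=\langle f,g\rangle\,\id,\qquad D(A)\,D(B)=D(BA).
\]
The last identity says $D$ is a unital linear \emph{anti}-homomorphism (indeed $D(\id)=\sum_\alpha s_\alpha s_\alpha^*=\id$), which already clarifies why $D(A)$ is spectrally linked to $A^*$, and gives $D(A)-\lambda=D(A-\lambda\,\id)$.

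The two point-spectrum inclusions follow at once. If $A^*g=\lambda g$ with $g\neq0$, the second identity gives $D(A)\bar\partial(g)=\lambda\bar\partial(g)$; since the ranges of the $s_\alpha$ are orthogonal, $\|\bar\partial(g)\xi\|^2=\sum_\alpha|\langle e_\alpha,g\rangle|^2\|\xi\|^2$, so $\bar\partial(g)\neq0$ and any $\xi$ with $\bar\partial(g)\xi\neq0$ is an eigenvector of $D(A)$ for $\lambda$, proving $\sigma_p(A^*)\subset\sigma_p(D(A))$. If instead $Af=\lambda f$ with $f\neq0$, the first identity gives $\partial(f)(D(A)-\lambda)=0$, hence $(D(A)^*-\bar\lambda)\partial(f)^*=0$; as $\partial(f)^*\neq0$ this means $\bar\lambda\in\sigma_p(D(A)^*)$, and by the standard Hilbert-space dictionary between $\sigma_p(T^*)$ and non-density of $\operatorname{ran}(T-\lambda)$ we conclude $\lambda\in\sigma_{res}(D(A))\cup\sigma_p(D(A))$.

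For the reverse inclusion $\sigma_p(D(A))\subset\sigma_{res}(A)\cup\sigma_p(A)$ I would start from $D(A)\xi=\lambda\xi$ with $\xi\neq0$ and apply the first identity to $\xi$, obtaining $\partial((A-\lambda)f)\xi=0$ for every $f\in V$. Since $\xi=\sum_\beta s_\beta s_\beta^*\xi$, there is $\beta$ with $s_\beta^*\xi\neq0$; fixing $\zeta:=s_\beta^*\xi$ and setting $\phi(h):=\langle\partial(h)\xi,\zeta\rangle$ defines a linear functional on $V$ with $\phi(e_\beta)=\|s_\beta^*\xi\|^2>0$ that annihilates $\operatorname{ran}(A-\lambda)$. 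Hence either $A-\lambda$ is not injective ($\lambda\in\sigma_p(A)$) or it is injective with non-dense range ($\lambda\in\sigma_{res}(A)$). The only place where the ambient topology enters is the continuity of $\phi$, which rests on the boundedness of $h\mapsto\partial(h)$ and lets one pass from annihilation to non-density via Hahn--Banach.

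For the full identity $\sigma(A)=\sigma(D(A))$ in the Frechet case I would argue through invertibility. Because $D$ is a unital anti-homomorphism, $A-\lambda$ invertible forces $D(A)-\lambda=D(A-\lambda)$ invertible with inverse $D((A-\lambda)^{-1})$, giving $\sigma(D(A))\subset\sigma(A)$. Conversely, if $D(B)$ with $B=A-\lambda$ is invertible, then $\partial(f)D(B)=\partial(Bf)$ and $D(B)\bar\partial(g)=\bar\partial(B^*g)$ show that $Bf=0$ or $B^*g=0$ force $f=0$, $g=0$, so $B$ is injective with dense range; moreover $\|\partial(f)\|\le\|\partial(Bf)\|\,\|D(B)^{-1}\|$ together with the norm-equivalence of $\partial$ makes $B$ bounded below, whence $B$ is invertible by the open mapping theorem — this is exactly where completeness (the Frechet hypothesis) is indispensable. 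Finally, taking the Hilbert adjoint of the defining series termwise gives $D^*(A)=\sum_{\alpha,\beta}\overline{\langle Ae_\alpha,f_\beta\rangle}\,s_\beta s_\alpha^*$, while evaluating $D(A^*)$ on the dual system $\{f_k,e_k\}$ and using $\langle A^*f_\alpha,e_\beta\rangle=\overline{\langle Ae_\beta,f_\alpha\rangle}$ produces $\sum_{\alpha,\beta}\overline{\langle Ae_\beta,f_\alpha\rangle}\,s_\alpha s_\beta^*$, which coincides after swapping $\alpha$ and $\beta$; the separable-dual hypothesis is precisely what guarantees that $\{f_k,e_k\}$ is again a biorthogonal (Markushevich) system on $V^*$, so that $D(A^*)$ is well defined and the series converge and match. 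I expect the main obstacle to be the reverse inclusion in the Frechet case — promoting injectivity with dense range to genuine invertibility goes beyond the algebraic identities and requires careful control of the norm-equivalence of $\partial,\bar\partial$ and the open mapping theorem — with the well-definedness of $D(A^*)$ on $V^*$ as the second delicate point.
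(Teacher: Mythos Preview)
Your treatment of the first three inclusions and of $D(A^*)=D^*(A)$ is essentially the paper's own proof, just made self-contained: the identities $\partial(f)D(A)=\partial(Af)$, $D(A)\bar\partial(g)=\bar\partial(A^*g)$ that you rederive are precisely the content of Lemma~7.3 in \cite{BFN2021} which the paper cites at each of steps (a)--(c), and your appeal to a Markushevich basis on the dual for part (e) is exactly the paper's one-line argument via \cite{M1943}. Your version is in fact more explicit in (c), where you build the annihilating functional $\phi(h)=\langle\partial(h)\xi,\zeta\rangle$ and note that continuity of $\partial$ is needed to pass to non-density; the paper leaves that step implicit.

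The one genuine divergence is the Frechet-space identity $\sigma(A)=\sigma(D(A))$. The paper does not argue this at all: it simply invokes Corollary~7.5 of \cite{BFN2021}. You instead attempt a direct proof via the anti-homomorphism property and the open mapping theorem. Your forward inclusion $\sigma(D(A))\subset\sigma(A)$ is clean, but the reverse step hinges on ``the norm-equivalence of $\partial$'', i.e.\ on $\|\partial(f)\|=\bigl(\sum_\beta|\langle f,f_\beta\rangle|^2\bigr)^{1/2}$ being comparable to the topology of $V$. In a general Frechet (or even Banach) space with a Markushevich basis this is false --- think of $V=\ell^1$ with the coordinate system, where $\|\partial(f)\|$ is the $\ell^2$ norm --- so the bounded-below conclusion does not follow and the argument as written has a real gap. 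You correctly flag this as the anticipated obstacle; be aware that the paper resolves it not by repairing this line of reasoning but by outsourcing to the cited corollary, so a self-contained proof along your lines would require an additional idea beyond the algebraic identities.
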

\begin{proof}
\begin{itemize}
\item[(a)]
If $\lambda\in \sigma_p(A)$ and, consequently, $Af=\lambda f, f\in V$, then we have $\partial(f)D(A)=\lambda\partial(f)$ by Lemma 7.3 in \cite{BFN2021}. Hence
\[
\partial(f)(D(A)-\lambda \id)=0,f\neq 0
\]
and the image of the operator $(D(A)-\lambda \id)$ is not dense in $H$.
\item[(b)] Similarly to (a), for $\lambda\in \sigma_p(A^*)$ and a corresponding eigenvector $g\in V^*$ we have by Lemma 7.3 in \cite{BFN2021} that
\[
(D(A)-\lambda \id)\bar{\partial}(g)=0.
\]
Consequently, $\lambda\in \sigma_{p}(D(A))$.
\item[(c)]
If $\lambda\in \sigma_p(D(A))$ and $\psi$ is a corresponding eigenvector then for any $f\in V$ we have by Lemma 7.3 in \cite{BFN2021} that
\[
\partial(Af)\psi=\lambda\partial(f)\psi,
\]
i.e. $\partial((A-\lambda \id)f)\psi=0, f\in V$. Thus an image of the operator $(A-\lambda \id)$ is not dense in $V$.
\item[(d)] Follows from Corollary 7.5 in \cite{BFN2021}.
\item[(e)] To show the equality $D(A^*)=D^*(A)$ it is enough to notice that by the classical result of Markushevich \cite{M1943} there exists a shrinking Markushevich basis. 

\end{itemize}

\end{proof}

\begin{rem}
In the theorems, lemmata and corollaries before, in which we discussed the spectrum, we did not assume that the operator is normal, symmetric or has any other properties than being bounded in locally convex topological separable Hausdorff space. In particular $A$ just needs to fulfill that it is weighted Hilbert-Schmidt operator on a space with a biorthogonal system, see the defnition of the norm in  \ref{eqn:wHS} . 
\end{rem}
\begin{cor}
Assume that $V$ is a separable Banach space. If $A\in \mathcal{L}(V,V)\cap HS^{(r,p)}$ is a compact operator, then $D(A)$ is compact. Furthermore, if $A$ belongs to Schatten class with regularity $p\geq 1$, then $D(A)$ belongs to Schatten class with the same regularity $p$.
\end{cor}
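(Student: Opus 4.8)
\section*{Proof proposal}

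The plan is to exploit the two facts already established in Theorem \ref{thm:Spectral}: that $A \mapsto VD(A)V^*$ is a \emph{linear} map, and that it is norm-controlled by the weighted Hilbert--Schmidt norm through \eqref{eqn:NormEstimate}. Linearity lets me reduce the general case to a dense, well-understood subclass by approximation, and the bound \eqref{eqn:NormEstimate} supplies exactly the continuity needed to transport that approximation from $A$ to $D(A)$.

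For the first assertion I would argue by finite-rank truncation. Given $A \in HS^{(r,p)}$, let $A_N$ be the operator whose matrix entry $\langle A_N e_m, f_n\rangle$ equals $\langle A e_m, f_n\rangle$ for $m,n \le N$ and vanishes otherwise. Because the series defining $\|A\|_{HS^{(r,p)}}^2$ in \eqref{eqn:wHS} converges, its tail tends to zero, so $\|A - A_N\|_{HS^{(r,p)}} \to 0$. Each $A_N$ has a matrix of finite rank, and by Lemma \ref{lem:Spectral} the corresponding kernel $K(A_N,z,w) = \suml_{m,n=1}^{N}\langle A e_m, f_n\rangle z^m w^{-n}$ is a finite sum of separated monomials, so $D(A_N)$ is a degenerate-kernel, hence compact, operator. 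Applying \eqref{eqn:NormEstimate} to $A - A_N$ gives
\[
\|VD(A)V^* - VD(A_N)V^*\|_{\mathcal{L}(\mathcal{H}_{+}^r(K),\mathcal{H}_{+}^p(K))} \le \|A - A_N\|_{HS^{(r,p)}} \to 0,
\]
so $D(A)$ is a norm-limit of compact operators, and therefore compact.

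For the Schatten statement I would pass through the unitary equivalence $D(A) \cong F(S_0^{+},(S_0^{+})^{*})$ of Theorem \ref{thm:Spectral} together with the integral representation \eqref{eqn:ReprIntegral}, which jointly identify $D(A)$ with the operator acting on Fourier coefficients by the matrix $(\langle A e_m, f_n\rangle)_{m,n}$. Since singular values, and hence Schatten norms, are invariant under unitary equivalence, the singular values of $D(A)$ coincide with those of this matrix operator, up to the multiplicity coming from the space $K$. Consequently membership in the Schatten class $S_p$ of regularity $p \ge 1$ transfers from $A$ to $D(A)$, and the same truncation scheme furnishes a quantitative version in which $A_N \to A$ in $S_p$ and the Schatten norm is controlled along the approximation.

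The hard part will be the bookkeeping of the weights and of the multiplicity space $K$ in the Schatten step. One must check that the identification of $D(A)$ with the matrix operator is a genuine unitary equivalence between the correctly weighted Sobolev spaces $\mathcal{H}_{+}^r(K)$ and $\mathcal{H}_{+}^p(K)$, so that the singular values are preserved and not merely dominated; the factors $(n+1)^{r}/(m+1)^{p}$ appearing in the estimate of Theorem \ref{thm:Spectral} must be absorbed into this change of variables rather than left as an inequality. Verifying that $D(A_N)$ is honestly finite rank in the presence of the factor on $K$, and that the singular-value correspondence is an exact equality, is where the argument needs the most care; once that identification is in place, the ideal properties of the compact operators and of $S_p$, namely closedness under operator-norm and Schatten-norm limits, close the proof.
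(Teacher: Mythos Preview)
Your compactness argument---truncate the matrix of $A$, observe that the tail of the series \eqref{eqn:wHS} goes to zero, and apply \eqref{eqn:NormEstimate} to $A-A_N$---is exactly the paper's proof, stated more explicitly.

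For the Schatten-class half, however, you take a different route from the paper and it hits a genuine obstruction rather than mere bookkeeping. The paper does \emph{not} try to match singular values. It invokes Lemma~\ref{lem:Spectrum}: from compactness of $D(A)$ and $\sigma(D(A))=\sigma(A)$, $\sigma_p(D(A))\subset\sigma_p(A)\cup\sigma_{res}(A)=\sigma_p(A)\cup\{0\}$, it concludes that the point spectrum of $D(A)$ sits inside that of $A$ (up to $0$), and then argues the Schatten membership from this spectral containment. Your approach instead tries to identify $VD(A)V^*$ unitarily with the matrix $(\langle Ae_m,f_n\rangle)_{m,n}$ and transport the singular values. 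But the integral representation \eqref{eqn:ReprIntegral} shows that on $\mathcal{H}_+(K)\cong \ell^2(\mathbb{N})\otimes K$ the operator $VD(A)V^*$ acts as that matrix \emph{tensored with the identity on $K$}: if $f(w)=\sum_n g_n w^n$ with $g_n\in K$, then $VD(A)V^*f(z)=\sum_m z^m\sum_n \langle Ae_m,f_n\rangle\, g_n$. Since $K=H\ominus S_0(H)$ is infinite-dimensional here, every nonzero singular value of the matrix would occur with infinite multiplicity in $D(A)$, so the hoped-for equality of singular values cannot hold and no $S_p$-bound can be extracted along this line. The same tensor-with-$I_K$ structure also means your $D(A_N)$ is not honestly finite rank; its range is $\operatorname{span}\{z^m:m\le N\}\otimes K$. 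So the ``multiplicity space'' issue you flag at the end is not a detail to be cleaned up but the point where the argument breaks, and one is pushed back to the paper's spectral-inclusion route via Lemma~\ref{lem:Spectrum}.
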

\begin{proof}
Let $A\in HS^{(r,p)}$ be a compact operator. Then $D(A)\in \mathcal{L}(\mathcal{H}_{+}^r(K),\mathcal{H}_{+}^p(K)))$ and can be approximated by finite rank projections in the same norm by inequality \eqref{eqn:NormEstimate}. Thus $D(A)$ is compact. By Lemma \ref{lem:Spectrum} $\sigma(D(A))=\sigma(A)=\sigma_p(A)\cup\{0\}$ and, moreover, $\sigma_p(D(A))\subset \sigma_p(A)\cup \sigma_{res}(A)=\sigma_p(A)\cup\{0\}$. Hence the point spectrum of $D(A)$  is discrete with the only possible accumulation point in $\{0\}$ and subset of the point spectrum of $A$ (up to point $\{0\}$). Since Schatten class norms depends on the point spectrum only  and $\sigma_p(D(A))\subset \sigma_p(A)$ we get the second part of the result.
\end{proof}

\subsection{Homeomorphic embedding of a countable group $G$}

Let $G$ be a countable group of order $|G|$ (which could be infinity), $\mathcal{F}(G)$ a space of all real valued functions on $G$, $H$ a separable Hilbert space of infinite dimension, $O(H)$ the group of orthogonal operators on $H$ and $\{S_{g}, g\in G\}$ the generators of Cuntz algebra $O_{|G|}$ of mutually orthogonal isometries of $H$. We enumerate generators $S_{\cdot}$ with elements of $G$. We can assume without loss of generality that
\[
\sum\limits_{h\in G} S_hS_h^*=Id.
\]

\begin{defin}\label{def:Representation}
Define $D:G\to O(H)$, $\partial,\bar{\partial}:\mathcal{F}(G)\to O_{|G|}$,  as follows
\[
D(g):=\sum\limits_{h\in G} S_h S_{gh}^*,\partial(\alpha):=\sum\limits_{h\in G} \alpha(h)S_h, \quad \bar{\partial}(\alpha):=\sum\limits_{h\in G} \alpha(h)S_h^*, \quad g\in G,\alpha\in \mathcal{F}(G).
\]
\end{defin}
The next lemma shows the well-definedness of $D$ on a group.
\begin{lem}\label{lem:PropHom}
The map $D$ is a faithful homomorphism of  groups: 
\begin{equation}\label{eqn:hom}
D(g)D(f)=D(gf), D^*(g)=D(g^{-1}), D(e)=Id,g,f\in G.
\end{equation}
Furthermore,
\begin{equation}\label{eqn:hom_2}
D(g)\partial(\alpha)=\partial(\alpha\circ g),\bar{\partial}(\beta)D(g)=\bar{\partial}(\beta\circ g^{-1}),
\bar{\partial}(\beta)\partial(\alpha)=\sum\limits_{h\in G}\alpha(h)\beta(h),\alpha,\beta\in \mathcal{F}(G),g\in G.
\end{equation}
\end{lem}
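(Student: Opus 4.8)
The plan is to reduce every assertion in \eqref{eqn:hom} and \eqref{eqn:hom_2} to the two defining relations of the Cuntz generators enumerated by $G$, namely the orthonormality $S_a^*S_b=\delta_{a,b}\id$ for $a,b\in G$ and the completeness relation $\sum_{h\in G}S_hS_h^*=\id$, and then to obtain each identity by substituting the definitions from Definition~\ref{def:Representation} and collapsing the resulting double sum. All series are read in the strong operator topology; as in Lemma~\ref{lem:O_fin_infin}, when $|G|=\infty$ I would first record that the relevant partial sums converge and that the mutual orthogonality of the ranges of the $S_h$ licenses the term-by-term rearrangements below.

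The two endpoint identities are immediate: $D(e)=\sum_h S_hS_h^*=\id$ is exactly the completeness relation, and taking adjoints in $D(g)=\sum_h S_hS_{gh}^*$ and reindexing by $k=gh$ gives $D(g)^*=\sum_h S_{gh}S_h^*=\sum_k S_kS_{g^{-1}k}^*=D(g^{-1})$. For the composition law I would multiply the two defining series and insert $S_{gh}^*S_{h'}=\delta_{gh,h'}\id$, which kills one summation index and leaves a single sum
\[
D(g)D(f)=\sum_{h,h'}S_hS_{gh}^*S_{h'}S_{fh'}^*=\sum_{h}S_hS_{f(gh)}^*=\sum_h S_hS_{(fg)h}^*,
\]
i.e.\ the group law of \eqref{eqn:hom}. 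The one genuinely error-prone point here is bookkeeping the order of multiplication: the collapse pairs the right-hand index of $D(g)$ with the left-hand index of $D(f)$, so one must track carefully which of $gf$ and $fg$ appears under the chosen convention for operator composition. Faithfulness then follows from the same computation, since $D(g)=\id$ applied to each $S_k$ yields $D(g)S_k=S_{g^{-1}k}=S_k$, and the distinctness of the isometries $\{S_m\}$ forces $g^{-1}k=k$ for all $k$, hence $g=e$.

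For the intertwining relations \eqref{eqn:hom_2} I would proceed identically. In $D(g)\partial(\alpha)$ the factor $S_{gh}^*S_k=\delta_{gh,k}\id$ leaves $\sum_h\alpha(gh)S_h=\partial(\alpha\circ g)$; dually, in $\bar\partial(\beta)D(g)$ the factor $S_k^*S_h=\delta_{k,h}\id$ leaves $\sum_h\beta(h)S_{gh}^*$, which after reindexing $m=gh$ is $\bar\partial(\beta\circ g^{-1})$; and finally $\bar\partial(\beta)\partial(\alpha)=\sum_{k,h}\beta(k)\alpha(h)S_k^*S_h=\bigl(\sum_h\alpha(h)\beta(h)\bigr)\id$ is the stated pairing. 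In every case the algebra is a one-line collapse of a double sum, so I expect no difficulty from the computations themselves; the main obstacle, and the only part deserving real care, is the analytic justification of these manipulations in the strong operator topology when $G$ is infinite, which I would handle exactly as in the setup underlying Lemma~\ref{lem:O_fin_infin}.
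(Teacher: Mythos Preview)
Your proposal is correct and follows exactly the approach the paper intends: the paper's own proof is the single sentence ``The result immediately follows from definitions of $D$, $\partial$, $\bar{\partial}$,'' and your write-up is precisely the Cuntz-relation collapse that this sentence abbreviates. Your flag about the order of $gf$ versus $fg$ in \eqref{eqn:hom} is well taken---the straightforward computation indeed yields $D(g)D(f)=D(fg)$ under the stated definition---so this is a point of convention (or a typo) in the paper rather than a defect in your argument.
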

\begin{proof}
The result immediately follows from definitions of $D$, $\partial$, $\bar{\partial}$.
\end{proof}

Since the operators $D(g), g\in G$ are orthogonal the spectral theorem for normal operators gives that 
\[
D(g)=\int\limits_{\sigma(D(g))} \lambda d E_{\lambda}(g),
\]
where $E_{\lambda}(g), \lambda\in\sigma(D(g))$ are spectral projections of $D(g)$. Consequently, for any $f\in L^2(\mathbb{S}^1)$ we can define 
\[
f(D(g)):=\int\limits_{\sigma(D(g))} f(\lambda) d E_{\lambda}(g).
\]
In this way, we can expand the group $G$ by adding, for example, all square roots of elements of $G$.
\begin{rem}
The representation in the definition \ref{def:Representation}  can be written using the construction of Lemma \eqref{lem:O_fin_infin} as follows. Let $G$ be an infinite countable group and $K:G\to \mathbb{N}$ 
be an arbitrary enumeration of elements of $G$ and $S_0:H\to H$ be a shift operator on $H$ then, in the same way as in the Lemma \ref{lem:Spectral}, we get the following formula
\begin{equation}\label{eqn:reprconstruction}
D(g):=\sum\limits_{h\in G}S_0^{K(h)-1}(S_0^*)^{K(gh)-1}-S_0^{K(h)}(S_0^*)^{K(gh)}, g\in G
\end{equation}
Every enumeration $K$ can be written as $K=\sigma \circ K_0$ where $K_0$ is some fixed enumeration and $\sigma:\mathbb{N}\to\mathbb{N}$ 
is a transposition.

Thus the representation \eqref{eqn:reprconstruction} is characterised by two parameters-- shift operator $S_0$
of $H$ and an element $\sigma$ of the group of transpositions.

%
%

\end{rem}
\begin{example}
Let $G=\mathbb{Z}_2\times \mathbb{Z}_2$ be the Klein group with generators $a,b$ and $S_e,S_a,S_b,S_{ab}$ be the mutually orthogonal isometries of auxiliary infinitedimensional separable Hilbert space $H$ generating  Cuntz algebra $O_4$. Then we have the following faithfull representation of $G$:
\[
D(e):=Id, D(a):=S_e S_a^*+S_a S_e^*+S_b S_{ab}^* + S_{ab} S_b^*,
\]
\[
D(b):=S_eS_b^*+S_b S_e^*+S_a S_{ab}^*+S_{ab} S_a^*, 
\]
\[
D(ab):=S_e S_{ab}^*+S_{ab} S_e^*+S_a S_b^*+S_b S_a^*.
\]
\end{example}

\begin{example}
Let $G=\mathbb{Z}$ and $K:\mathbb{Z}\to \mathbb{N}$ given by
\[
K(m):=\left\{
\begin{array}{rcl}
2m+1 & , & m\in\mathbb{N}\cup\{0\}\\
-2m & , & m<0
\end{array}
\right.
\]
Then $D(0)=I$ and for $m\in\mathbb{N}$,
\begin{eqnarray}
D(m)&=\suml_{k=1}^m S_0^{2k-1}(I-S_0 S_0^*)(S_0^*)^{2m-2k}\nonumber\\
&+
\suml_{k=m+1}^{\infty} S_0^{2k-1}(I-S_0 S_0^*)(S_0^*)^{2k-2m-1}\nonumber\\
&+\suml_{n=0}^{\infty} S_0^{2n}(I-S_0 S_0^*)(S_0^*)^{2n+2m},
\end{eqnarray}
 $D(-m)=D(m)^*$ or, direct calculation gives
\begin{eqnarray}
D(-m)&=\suml_{n=0}^m S_0^{2n}(I-S_0 S_0^*)(S_0^*)^{2(m-n)-1}\nonumber\\
&+
\suml_{n=m+1}^{\infty} S_0^{2n}(I-S_0 S_0^*)(S_0^*)^{2(n-m)}\nonumber\\
&+\suml_{n=1}^{\infty} S_0^{2n}(I-S_0 S_0^*)(S_0^*)^{2n+2m-1},
\end{eqnarray}
Notice that, we have $D(m)=D(1)^m,m\in\mathbb{N}$ and Lemma \ref{lem:PropHom} (identity \eqref{eqn:hom_2}) implies that $D(m)$ acts as shift on the sequence $\alpha=\{\alpha_k\}_{k\in\mathbb{Z}}$ (represented by operator $\partial(\alpha)$).

\end{example}

\section*{Acknowledgments}

M.\,N.\ and V.\,F.\ are supported in part by the CNPq (402449/2021-5).

\end{document}